\newtheorem{theorem}{Theorem}
\newtheorem{lemma}{Lemma}
\newtheorem{claim}{Claim}
\newtheorem{remark}{Remark}
\begin{document}
\title{Growth of Replacements}
\author{Vuong Bui}
\address{Vuong Bui, Institut f\"ur Informatik, 
Freie Universit\"{a}t Berlin, Takustra{\ss}e~9, 14195 Berlin, Germany}
\thanks{The author is supported by the Deutsche 
Forschungsgemeinschaft (DFG) Graduiertenkolleg ``Facets of Complexity'' 
(GRK 2434).}
\email{bui.vuong@yandex.ru}
\begin{abstract}
	The following game in a similar formulation to Petri nets and chip-firing games is studied: Given a finite collection of baskets, each has an infinite number of balls of the same value. Initially, a ball from some basket is chosen to put on the table. Subsequently, in each step a ball from the table is chosen to be replaced by some $2$ balls from some baskets. Which baskets to take depend only on the ball to be replaced and they are decided in advance. Given some $n$, the object of the game is to find the maximum possible sum of values $g(n)$ for a table of $n$ balls.

	In this article, the sequence $g(n)/n$ for $n=1,2,\dots$ will be shown to converge to a growth rate $\lambda$. Furthermore, this value $\lambda$ is also the rate of a structure called pseudo-loop and the solution of a rather simple linear program. The structure and the linear program are closely related, e.g. a solution of the linear program gives a pseudo-loop with the rate $\lambda$ in linear time of the number of baskets, and vice versa with the pseudo-loop giving a solution to the dual linear program. A method to test in quadratic time whether a given $\lambda_0$ is smaller than $\lambda$ is provided to approximate $\lambda$. When the values of the balls are all rational, we can compute the precise value of $\lambda$ in cubic time, using the quadratic time rate test algorithm and the binary search with a special condition to stop. Four proofs of the limit $\lambda$ are given: one just uses the relation between the baskets, one uses pseudo-loops, one uses the linear program and one uses Fekete's lemma (the latest proof assumes a condition on the rule of replacements).
\end{abstract}

\maketitle

\section{Introduction}
\label{sec:introduction}
Suppose we have a finite number of baskets, each basket contains infinitely many balls of the same value. We start with choosing a ball from some basket to put on a table. At each subsequent step, we replace one ball on the table by two balls from some baskets with respect to a given set of rules that only involves the baskets where the balls are from. When there are $n$ balls on the table for a given $n$, we stop and evaluate the sum (and the average) of the values of all the $n$ balls. Our aim is to achieve the highest possible sum (and average) for a given $n$ by choosing appropriately the basket of the first ball to put on the table and the ball to replace at each subsequent step. An asymptotic behavior is that when $n$ tends to infinite, this best average converges to a constant $\lambda$, which is called the \emph{growth rate} of the system.

Figure \ref{fig:example} gives an example of the setting. There are four baskets with balls of values $1,2,3,4$. The replacements are done in a rotating manner. 
\begin{figure}[ht]
    \centering
    \includegraphics[width=0.5\textwidth]{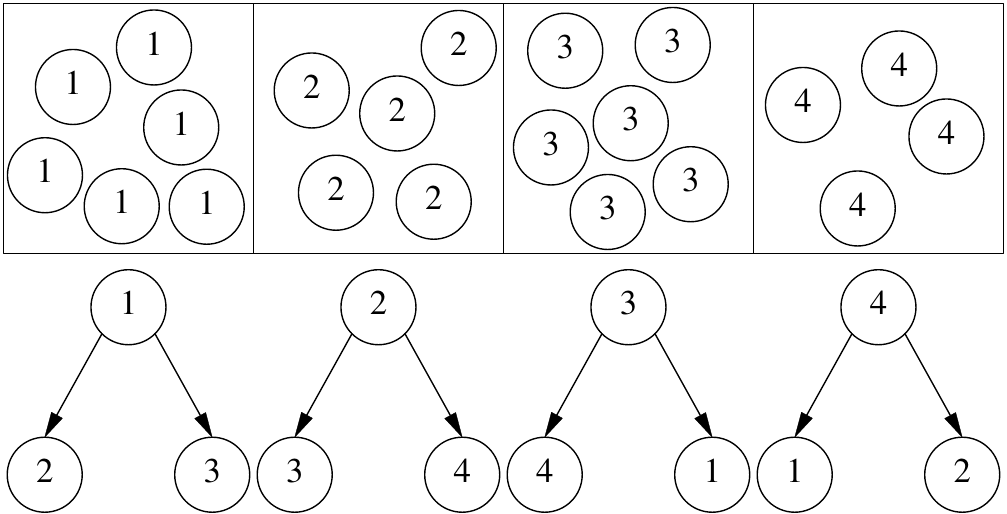}
    \caption{A simple example}
    \label{fig:example}
\end{figure}

Let us state the problem in an equivalent but more formal way, which will be used throughout the text. The formulation starts with a collection $V$ of functions $v: \mathbb N^+\to \mathbb R$. Denote $c_v = v(1)$ for each function $v\in V$, which will be called the \emph{starting values} later. Assume we have an assignment of a pair of functions $M(v) = (u, w)$ to each $v$ ($u,v,w\in V$ not necessarily different) such that $v(n)$ for $n\ge 2$ is given by
\begin{equation}
	\label{eq:setting}
	v(n) = \max_{1\le m\le n-1} u(n-m) + w(m).
\end{equation}

Let $g(n)$ denote the maximum of the values of the functions at $n$, that is
\[
	g(n) = \max_{v\in V} v(n).
\]
 
In this text, we show that the sequence $\{g(n)/n\}_{n=1}^\infty$ converges to the so-called \emph{growth rate $\lambda$ of the system}:
\[
	\lambda = \lim_{n\to\infty} \frac{g(n)}{n}.
\]

The equivalence between the formulations is not so hard to see. Each function $v$ corresponds to a basket with $c_v$ as the value of a ball in the basket. The value of $v(n)$ is the maximum sum obtained from $n$ balls if we start with a ball from the basket corresponding to $v$. The value of $g(n)$ is then the maximum sum when we do not restrict which ball to start with.

The following table provides some beginning values of $g(n)$ for the introductory example. It suggests that the growth rate is some number that starts with $1.6$. In fact, the growth rate is $11/3=1.6666\dots$, by the mechanism in Figure \ref{fig:pattern}.
\begin{center}
\begin{tabular}{ |c|c|c|c|c|c|c|c|c|c|c|c|c|c|c|c|c|c|c| }
\hline
$n$ & 1 & 2 & 3 & 4 & 5 & 6 & 7 & 8 & 9 & 10 & 11 & 12 & 13 & 14 & 15 & 16 & 17 & 18 \\
\hline
$g(n)$ & 4 & 7 & 10 & 14 & 18 & 21 & 25 & 29 & 32 & 36 & 40 & 43 & 47 & 51 & 54 & 58 & 62 & 65 \\
\hline
\end{tabular}
\end{center}

\subsection*{Related notions}
The readers who are familiar with Petri nets \cite{peterson1977petri} and chip-firing games \cite{bjorner1992chip} may recognize that the problem is somewhat like both of them in the setting but different in the object. We can formulate the problem as a Petri net: There is a place corresponding to each basket. For each rule of replacing a ball from basket $A$ by $2$ balls from baskets $B$ and $C$, we establish a transition that takes the place of $A$ as the input and the places of $B,C$ as the outputs. An initial place is chosen to put a token into, and we start firing the transitions. We can also formulate the problem as a variant of the chip-firing game: Consider a directed graph with loops and multiple edges allowed and the outdegree of every vertex is $2$. We start the game by putting a chip on some vertex. In each subsequent step, we choose a chip at some vertex $v$, remove it from the graph and add a chip in each of the two vertices that can be reached from $v$ by a directed edge. (In the original chip-firing game, we need to remove $2$ chips from vertex $v$ in each step, which is the outdegree.) In either formulation, the value of a token (a chip) depends only on the place (the vertex) it lies on. The object is to find a way to obtain precisely $n$ tokens ($n$ chips) for a given $n$ so that the sum of the values is maximum. The asymptotic behavior of the maximum sum with respect to $n$ is to be investigated. The problem is more similar to Petri net in the way tokens are transited in each firing and is more similar to the chip-firing game in the number of inputs for each firing. However, we fix the number of outgoing edges in both formulations as the convergence may not hold otherwise. For example: Suppose there is a graph of two vertices $A,B$ with two loops around $A$ and three loops around $B$. If the value of a chip in $A,B$ is respectively $5,10$, then the maximum sum is $10n$ for $n$ odd and $5n$ for $n$ even, a divergence. Also, there may be no way to obtain $n$ chips for certain $n$ in some other settings. Note that if we study the growth of the sum of values with respect to the number of \emph{steps} (instead of the number of balls), the growth rate, which is $20$, is well defined in the example.

The problem we are studying is in some sense of the same type as Fekete's lemma \cite{fekete1923verteilung}, which states that for a \emph{superadditive} function $f: \mathbb N^+ \to \mathbb R$, that is $f(n+m) \ge f(n) + f(m)$ for any $n,m\ge 1$, we have $\lim_{n\to\infty} f(n)/n$ exists. Our setting differs from Fekete's lemma in two points: (i) instead of the equivalent inequality $f(n) \ge \max_{1\le m\le n-1} f(n-m) + f(m)$, we use the equality as in Equation \eqref{eq:setting}, and (ii) instead of one function, a collection of functions are involved. Note that if the equality in Equation \eqref{eq:setting} is replaced by the inequality, then the limit we are studying does not necessarily exist. For example, consider the functions $v_0, v_1$ so that $v_0(n)\ge\max_{1\le m\le n-1} v_0(n-m) + v_0(m)$ and $v_1(n)\ge\max_{1\le m\le n-1} v_0(n-m) + v_0(m)$. If the two functions are $v_0(n)=n$ for every $n$ and $v_1(n)=n$ for odd $n$ and $v_1(n)=2n$ for even $n$, then the maximum average changes between $1$ and $2$ as $n$ increases. However, if the dependency graph, which will be defined later, is connected, then the limit still exists, by the proof in Section \ref{sec:proof-by-fekete}. The readers can check for themselves that the techniques there also work for the case of inequalities.

One can also formulate this problem in terms of context free grammar (in Chomsky normal form). Let us consider the following language: There is a nonterminal symbol $V$ associated to each function $v$, the production rule $V\rightarrow UW$ corresponds to the assignment of $u,w$ to each $v$, and there is also a production $V\rightarrow \underline{v}$ for each nonterminal symbol $V$, where $\underline{v}$ is a terminal symbol for which we assign the weight $v(1)$. We define the weight of a word to be the sum of all the symbols in that word. The function $v(n)$ is then the maximum weight of a word of length $n$ if we start with the symbol $V$. Every other term is mapped accordingly.

In fact, the original motivation is that the problem is a special case of a problem posed by Rote in \cite{rote2019maximum}: Given a bilinear map $*:\mathbb R^d\times \mathbb R^d\to\mathbb R^d$ and a vector $v\in\mathbb R^d$ (the coefficients of $*$ and the entries of $v$ are all nonnegative), we consider all the possible ways to combine $n$ instances of $v$ using $n-1$ instances of $*$. For a given $n$, we are interested in the largest entry of a resulting vector one can achieve, and the question is whether this follows a growth rate, and how does it follow if so? In that paper, the maximum number of minimal dominating sets in a tree of $n$ leaves is studied in this way. Many applications to other quantities of trees are presented in \cite[Section 5]{rosenfeld2021growth}. However, our instance in this paper is not general enough to cover the applications there. (Note that in order to see our formulation as a special case to the general setting, one needs to apply logarithms, which turn products into sums.) The limit presenting the exponential growth in the general setting is proved to be valid for the case of positive vectors $v$ in \cite{bui2021growth} by a special structure called \emph{linear pattern}. It means that the growth rate for our problem is also valid. However, in this work we give different proofs to our problem since it is somewhat simpler. In fact, we will use a structure called \emph{pseudo-loop}, which is very similar to linear pattern. When $v$ is only nonnegative, the limit may not exist. Checking if the limit superior is a given number is shown to be undecidable in \cite{rosenfeld2022undecidable}. Meanwhile, the growth rate in our setting can be computed precisely, and quite efficiently in certain cases.

\subsection*{Growth rate under different perspectives}
Four proofs for the validity of the limit $\lambda$ will be given. The first one uses only the dependency graph (defined later) as in Theorem \ref{thm:lim_exists} below. The second one as in Theorem \ref{thm:supremum} relates the growth rate to the rates of pseudo-loops. The third one as in Theorem \ref{thm:equiv_to_linear_prog} relates the growth rate to the solution of a linear program. The fourth one, which assumes the connectedness of the dependency graph, is given in Section \ref{sec:proof-by-fekete}. Although the latest proof does not work without the condition, it demonstrates a nice application of Fekete's lemma. Among the proofs, the proof that removes inner pseudo-loops in Section \ref{sec:over_all_loops} is perhaps the simplest and shortest one.

We will regard not only the growth rate of the system but also the \emph{growth rate of an individual function} $\lambda_v = \lim_{n\to\infty} v(n)/n$ sometimes. It turns out that both the growth rate of the system and the growth rate of each individual function are valid due to the following theorem, whose proof is given in Section \ref{sec:lim_exists}.

\begin{theorem}
	\label{thm:lim_exists}
	Both $\lambda = \lim_{n\to\infty} g(n)/n$ and $\lambda_v = \lim_{n\to\infty} v(n)/n$ for every $v$ exist.
\end{theorem}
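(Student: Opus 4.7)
The plan is to show $v(n)/n$ converges for every $v \in V$; since $V$ is finite and $g(n)=\max_{v\in V}v(n)$, convergence of $g(n)/n$ to $\max_v\lim v(n)/n$ then follows. Write $\lambda_v^+:=\limsup_{n\to\infty}v(n)/n$ and $\lambda_v^-:=\liminf_{n\to\infty}v(n)/n$; both lie in $[-C,C]$ for $C:=\max_u|c_u|$, because any unfolding of $v(n)$ into a binary tree has exactly $n$ leaves each contributing a starting value of absolute value at most $C$. The task reduces to proving $\lambda_v^-=\lambda_v^+$ for every $v$.

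First I would analyse $\lambda_v^+$ via the recursion. The splits $m=1$ and $m=n-1$ in \eqref{eq:setting} give $v(n)\ge c_u+w(n-1)$ and $v(n)\ge u(n-1)+c_w$, whence $\lambda_v^+\ge\max(\lambda_u^+,\lambda_w^+)$ for $M(v)=(u,w)$. For the reverse, along a subsequence $n_j\to\infty$ with $v(n_j)/n_j\to\lambda_v^+$ and optimal splits $v(n_j)=u(k_j)+w(n_j-k_j)$, pass to a further subsequence with $k_j/n_j\to\alpha\in[0,1]$: for $\alpha\in(0,1)$ the target is a convex combination of subsequential limits of $u(k_j)/k_j$ and $w(n_j-k_j)/(n_j-k_j)$, each bounded by the corresponding $\lambda^+$; for $\alpha\in\{0,1\}$ the vanishing side contributes only $o(n_j)$. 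Iterating $\lambda_v^+=\max(\lambda_u^+,\lambda_w^+)$ gives $\lambda_v^+\ge\lambda_x^+$ for every $x$ reachable from $v$ in the dependency graph, so $\lambda_v^+$ is constant on each strongly connected component $S$; denote this common value by $\Lambda_S$.

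The main obstacle is to prove $\lambda_v^-=\Lambda_S$ for $v\in S$, since $\liminf$ does not propagate cleanly along the recursion and the graph structure has to be used directly. Given $\epsilon>0$, choose by definition of $\limsup$ an arbitrarily large $n_0$ and a vertex $v_0$ with $v_0(n_0)\ge(\Lambda_S-\epsilon)n_0$, and take an optimal tree $T_0$ witnessing it. On a sufficiently long root-to-leaf path of $T_0$, pigeonhole over $V$ forces some label $x$ to repeat, and the cyclic subwalk together with its ``side'' subtrees is a pumping block contributing a fixed number $n_c$ of leaves and value $V_c$. An averaging argument over all such blocks inside $T_0$ forces at least one to have rate $V_c/n_c\ge\Lambda_S-O(\epsilon)$. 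Using strong connectivity of $S$ to prepend a short dependency-graph walk from $v$ to $x$, then pumping the block $t$ times and attaching a small completion subtree whose size varies over $1,\dots,n_c$ so as to cover every residue class, one constructs, for every sufficiently large $N$, a tree rooted at $v$ of $N$ leaves with rate at least $\Lambda_S-O(\epsilon)$. Hence $\lambda_v^-\ge\Lambda_S-O(\epsilon)$, and letting $\epsilon\downarrow 0$ yields $\lambda_v^-=\Lambda_S=\lambda_v^+$. Singleton SCCs $\{v\}$ with $M(v)=(v,v)$ are a degenerate case with $v(n)=nc_v$ directly, requiring no pumping.
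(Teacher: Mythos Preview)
Your argument is correct and takes a genuinely different route from the paper's primary proof of Theorem~\ref{thm:lim_exists} in Section~\ref{sec:lim_exists}. The paper isolates a cyclic Fekete-type lemma (Lemma~\ref{lem:round_fekete}) for chains of functions arranged along a directed cycle, and then runs an induction over the condensation of the dependency graph: non-single components are handled by the lemma, single components by the recursion $\lambda_v=\max(\lambda_u,\lambda_w)$ once lower components are settled. You instead establish the identity $\lambda_v^+=\max(\lambda_u^+,\lambda_w^+)$ directly (hence constancy of $\lambda_v^+$ on components), and then bound $\lambda_v^-$ from below by extracting a high-rate ``pumping block'' from a near-optimal tree and iterating it. That second step is precisely the inner-pseudo-loop removal argument the paper uses in Section~\ref{sec:over_all_loops} to prove Theorem~\ref{thm:supremum}, carried out per function instead of for $g$. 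In effect you prove Theorems~\ref{thm:lim_exists} and~\ref{thm:supremum} in one sweep, bypassing Lemma~\ref{lem:round_fekete}; the paper's approach, conversely, keeps the Fekete-style analysis self-contained and delays the combinatorics of pseudo-loops to a separate theorem.

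Two small imprecisions worth tightening. First, the averaging step requires the full iterative removal of inner pseudo-loops across the whole tree (as in Section~\ref{sec:over_all_loops}), not merely repeats along a single root--leaf path; a single path need not account for all but $O(1)$ of the $n_0$ leaves. Second, the appeal to ``strong connectivity of $S$'' for the prefix walk from $v$ to $x$ is unnecessary and slightly misleading: taking $v_0=v$, the label $x$ occurs in a tree rooted at $v$, so $x$ is automatically reachable from $v$ in the dependency graph (whether or not $x\in S$), with walk length bounded by $|V|$. This also means your argument already covers the ``single component'' case $M(v)=(u,w)$ with $u,w\neq v$ without any special handling; the aside about $M(v)=(v,v)$ is a non-issue since that configuration has a loop and pumping applies trivially.
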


The proof of Theorem \ref{thm:lim_exists} and further study rely on the following definitions: the dependency graph and composition trees.

The \emph{dependency graph} is the graph whose set of vertices is $V$ and there is a directed edge from $v$ to $u$ if and only if one of the two functions in $M(v)$ is $u$ (loops are allowed).
As the dependency graph is directed, it can be partitioned into strongly connected components. A component is said to be a \emph{single component} if it contains only one vertex and there is no loop for that vertex. In other words, the only vertex $v$ in a single component has the outgoing edges $vu$ and $vw$ for $u,w$ both different from $v$. One can see that being single for a component is identical to acyclicity (by definition, a component of one vertex that has a loop is not a single component).
Let us consider the \emph{condensation} of the dependency graph, which is the acyclic graph with each vertex corresponding to a strongly connected component and there is a directed edge $UV$ if and only if there is an edge $uv$ with $u\in U$ and $v\in V$ in the dependency graph. The condensation defines a partial order between components where a directed edge $UV$ means $U\ge V$. A minimal component, which is not greater than any other component, cannot be a single component since otherwise the minimality implies the only function depending on itself, which in turn contradicts its membership in the single component.

One can relate the evaluation of a function $v(n)$ to \emph{composition trees} whose definition is given as follows. For any binary tree of $n$ leaves, we start with labeling the root of the tree with $v$ and suppose $M(v)=(u,w)$, we label the left child of the root with $u$ and the right child with $w$. We subsequently label all the vertices of the subtrees with the same method. Suppose the labels of the leaves are $v_1, \ldots, v_n$, then the tree would be evaluated as $\sum_i c_{v_i}$. Such a labeled tree with this way of evaluation is called a \emph{composition tree}. One can see that the value of $v(n)$ is the largest evaluation over all the composition trees of $n$ leaves. Note that the label for a vertex in a composition tree is actually a vertex of the dependency graph. Unless stated otherwise, all the trees will be regarded as composition trees.

Let us consider a simple pattern for composition trees. Let $T$ be a tree with some label for the root and a specially marked leaf that has the same label as the root. Let the sequence of trees $\{T^n\}_{n=1}^\infty$ be defined so that $T^1=T$ and $T^n$ for $n\ge 2$ is obtained from $T^{n-1}$ by replacing the marked leaf of $T^{n-1}$ by $T$. The marked leaf of $T^n$ is defined to be the marked leaf of the instance of $T$. A tree $T$ defined in this way is called a \emph{pseudo-loop}. The path from the root to the marked leaf is called the \emph{main path}. The \emph{value of a pseudo-loop} is defined to be the sum of all leaves excluding the leaf at the main path. It is not hard to see that the evaluation of the trees $\{T^n\}_n$ follows a rate, which is the average of the values of all the leaves excluding the marked one. This rate will be called \emph{the rate of the pseudo-loop}.

For the pseudo-loop in Figure \ref{fig:pattern}, the marked leaf and the root both have label $2$. The value of the pseudo-loop is $4+4+3=11$, which means the rate of the pseudo-loop is $11/3$.

The following definitions on pseudo-loops will be also used later. For a subtree with the root $a$ having some label and one of its descendants $b$ having the same label as $a$, a pseudo-loop obtained from the subtree by removing every further descendant of $b$ is called an \emph{inner pseudo-loop}. By \emph{removing} an inner pseudo-loop we mean contracting the whole inner pseudo-loop into a vertex. If removing an inner pseudo-loop from a pseudo-loop still gives a valid pseudo-loop, then the inner pseudo-loop is said to be \emph{removable}. Note that removability is considered only in the context of a pseudo-loop while an inner pseudo-loop can be a subgraph of either a tree or a pseudo-loop.

The relation between the growth rate $\lambda$ and the notion of pseudo-loop is given in the following theorem.
\begin{theorem}
	\label{thm:supremum}
	The growth rate exists and it is the supremum of the rates of all pseudo-loops.
\end{theorem}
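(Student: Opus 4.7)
The plan is to prove the two inequalities separately: (i) every pseudo-loop has rate at most $\lambda$, and (ii) $\lambda \le L$, where $L := \sup_T \mathrm{rate}(T)$ is the supremum over all pseudo-loops. Since Theorem~\ref{thm:lim_exists} already guarantees the existence of $\lambda$, these two bounds together give the claim. For (i), if $T$ is a pseudo-loop with root label $v$, $k$ leaves, and value $s$, then each iterate $T^n$ is itself a composition tree with $n(k-1)+1$ leaves and total evaluation $ns+c_v$, so $g(n(k-1)+1)\ge ns+c_v$; dividing by the leaf count and letting $n\to\infty$ yields $\lambda \ge s/(k-1)=\mathrm{rate}(T)$.

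For (ii) we may assume $L$ is finite. The core idea is a \emph{loop-peeling} procedure applied to an arbitrary composition tree $T$ with $n$ leaves. Whenever $T$ has depth at least $|V|$, some root-to-leaf path of that length must contain, by pigeonhole, two vertices $a,b$ sharing a label with $a$ a strict ancestor of $b$; the subtree rooted at $a$, with all further descendants of $b$ discarded, is an inner pseudo-loop $P$ with some leaf count $k_P$ and value $s_P$, satisfying $s_P \le L(k_P - 1)$ by the definition of $L$. Contract $P$ by replacing the piece from $a$ down through $b$ by a single vertex of their common label, to which the subtree originally hanging below $b$ is reattached. Since $a$ and $b$ carry the same label and the subtree below $b$ was already consistent with $M$ applied at that label, the result is still a valid composition tree; the operation costs exactly $s_P$ in evaluation and exactly $k_P-1$ leaves.

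Iterate this contraction until the residual tree $T^*$ has depth less than $|V|$, hence a bounded number of leaves and thus an evaluation bounded by some constant $C$ depending only on the instance. Telescoping the costs,
\[
\mathrm{eval}(T) \;=\; \mathrm{eval}(T^*) + \sum_i s_{P_i} \;\le\; C + L\sum_i(k_{P_i}-1) \;\le\; C + Ln,
\]
and taking $T$ optimal for each $n$ yields $g(n) \le Ln + C$, whence $\lambda \le L$. The main obstacle is the bookkeeping: one must check carefully that contraction always preserves the composition-tree property (regardless of which inner pseudo-loop is chosen at each step) and that the total leaf loss really telescopes to $n-\mathrm{leaves}(T^*)$. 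Termination of the loop-peeling is automatic, since each contraction strictly decreases the vertex count.
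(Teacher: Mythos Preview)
Your argument is correct and coincides with the paper's first proof of this theorem (the one that removes inner pseudo-loops): decompose an optimal composition tree into a bounded residual plus a collection of inner pseudo-loops, each of rate at most $L$, to obtain $g(n)\le Ln+O(1)$. The only minor differences are that you invoke Theorem~\ref{thm:lim_exists} for the lower bound (so a subsequence suffices) and phrase the upper bound directly rather than by contradiction; one small slip is that the displayed step $L\sum_i(k_{P_i}-1)\le Ln$ tacitly assumes $L\ge 0$, but since $\sum_i(k_{P_i}-1)=n-\mathrm{leaves}(T^*)=n-O(1)$ the bound $g(n)\le Ln+O(1)$ holds regardless of the sign of $L$.
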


Furthermore, we can find the best rate in a finite set of pseudo-loops. That is to say the supremum is always attainable.
\begin{theorem}
	\label{thm:maximum}
	There exists a pseudo-loop with the same rate as the growth rate of the system. It can be found among pseudo-loops that do not contain any removable inner pseudo-loop. In particular, such a pseudo-loop has at most $|V|2^{|V|-1}$ leaves after excluding the marked one.
\end{theorem}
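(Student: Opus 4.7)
The plan is to deduce from Theorem \ref{thm:supremum}, which writes $\lambda$ as the supremum of the rates of pseudo-loops, that this supremum is actually attained, and by a pseudo-loop with no removable inner pseudo-loops. Two ingredients drive the argument: a reduction step showing that any pseudo-loop containing a removable inner pseudo-loop can be replaced by a strictly smaller pseudo-loop of rate at least as large, and a combinatorial size bound on the pseudo-loops that survive the reduction.

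For the reduction, I would observe that if $P$ has value $V$ and $k$ non-marked leaves and $P'$ is a removable inner pseudo-loop inside $P$ with value $V_1$ and $k_1$ non-marked leaves, then $P \setminus P'$ has value $V - V_1$ and $k - k_1$ non-marked leaves, so
\[
    r(P) \;=\; \frac{V_1 + (V - V_1)}{k_1 + (k - k_1)}
\]
is a weighted average of $r(P')$ and $r(P \setminus P')$. Hence at least one of $P'$, $P \setminus P'$ has rate at least $r(P)$, and since both have strictly fewer non-marked leaves than $P$, iterating must terminate at an irreducible pseudo-loop $P^*$ with $r(P^*) \ge r(P)$.

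For the size bound, I would analyse the main path $v_0, v_1, \ldots, v_\ell$ of an irreducible $P^*$ together with the off-path subtrees $T_0, \ldots, T_{\ell - 1}$. If two main-path vertices $v_i, v_j$ with $0 \le i < j \le \ell$ and $(i,j) \ne (0, \ell)$ share a label, then the subtree rooted at $v_i$ with the strict descendants of $v_j$ removed is a removable inner pseudo-loop: contracting it reconnects the main path as $v_0, \ldots, v_{i-1}, x, v_{j+1}, \ldots, v_\ell$, with $x$ inheriting the children that $v_j$ had in the outer structure, and one checks this is a valid pseudo-loop (with the obvious adjustments when $i = 0$ or $j = \ell$). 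Irreducibility therefore forces distinct labels along the main path, giving $\ell \le |V|$. Similarly, any repeated label along a root-to-leaf path in some $T_i$ yields an inner pseudo-loop entirely off the main path; its contraction leaves the main path untouched, so it is removable. Hence each $T_i$ has all distinct labels along every root-to-leaf path, giving depth at most $|V| - 1$ and at most $2^{|V| - 1}$ leaves, and summing over the at most $|V|$ off-path subtrees bounds the non-marked leaves of $P^*$ by $|V| \cdot 2^{|V| - 1}$.

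To finish, pseudo-loops of bounded size over a finite label set form a finite family, so the reduction combined with Theorem \ref{thm:supremum} shows $\lambda$ is the maximum of a finite set of rates and is attained by an irreducible pseudo-loop. I expect the main obstacle to be the contraction argument in the on-main-path case: one must verify that the merged vertex's children match $M$ applied to its label, and that the reconnected main path remains consistent in the boundary situations where the root or the marked leaf itself lies inside the contracted region. The off-main-path case, by contrast, is straightforward since the surrounding tree structure is preserved verbatim.
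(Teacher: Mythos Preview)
Your proposal is correct and follows essentially the same approach as the paper: the reduction step via the weighted-average inequality is exactly the paper's ``remove or keep the inner pseudo-loop'' dichotomy, and your size bound via distinct labels on the main path (excluding the forced repeat at the endpoints) together with inner-pseudo-loop-free off-path subtrees of depth at most $|V|-1$ matches the paper's argument line for line. The boundary cases you flag for the on-main-path contraction do work out, since the contracted vertex inherits the label of $v_i=v_j$ and the children of $v_j$, which are precisely $M$ of that label.
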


The proofs of the two above theorems can be found in Section \ref{sec:over_all_loops}. A pseudo-loop that attains the growth rate $11/3$ of the introductory example is given in Figure \ref{fig:pattern}. Note that the space of the pseudo-loops in Theorem \ref{thm:maximum} is so large that finding a solution for a large $|V|$ is impractical when using only the brute force search.

\begin{figure}[ht]
    \centering
    \includegraphics[width=0.16\textwidth]{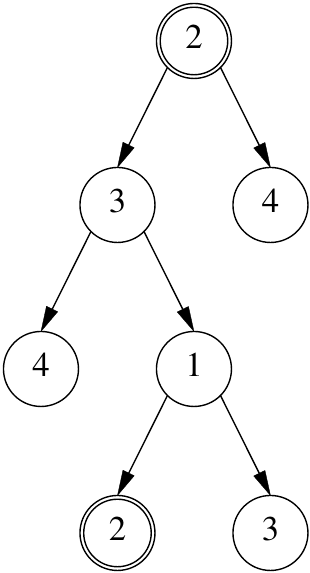}
    \caption{A pseudo-loop attaining the growth rate}
    \label{fig:pattern}
\end{figure}

The readers may relate pseudo-loops to linear patterns in the work \cite{bui2021growth}, where a similar result to Theorem \ref{thm:supremum} is given. However, the growth rate in \cite{bui2021growth} is not always the rate of a linear pattern as in the specific case of this paper.

In order to make another equivalence to the growth rate than the maximum rate over all pseudo-loops, we study the following system \footnote{This system was suggested by G\"unter Rote (private communication). The readers may relate it to \cite[Proposition 5.1]{rote2019maximum}.} of $2|V|$ inequalities: For every function $v$,
\begin{equation}
	\label{eq:ineq_sys}
	\begin{matrix}
		z_v &\ge  &c_v - \theta\\
		z_v &\ge  &z_u + z_w
	\end{matrix}
\end{equation}
where $\{z_v: v\in V\}$ and $\theta$ are variables, $c_v$ is $v(1)$ as already defined, and $M(v) = (u, w)$.

The set of the solutions is nonempty, e.g. $z_v=0$ for all $v$ and $\theta$ is the maximum of all $c_v$.

Consider the linear program minimizing $\theta$ subjecting to System \eqref{eq:ineq_sys}, we have the following representation of the growth rate.
\begin{theorem}
	\label{thm:equiv_to_linear_prog}
	The growth rate exists and it is the solution of $\theta$ to the linear program.
\end{theorem}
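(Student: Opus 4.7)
The plan is to prove both inequalities $\theta^*\ge\lambda$ and $\theta^*\le\lambda$ directly, without invoking LP duality in full (although duality is what makes the statement natural, and the dual optimum should be realized by pseudo-loops as Theorem~\ref{thm:supremum} suggests). The first direction says any feasible $(z,\theta)$ furnishes an upper bound on the growth; the second constructs, for every $\theta$ slightly above $\lambda$, an explicit feasible $z$.

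For $\theta^*\ge\lambda$, fix any feasible $(z,\theta)$ and prove by strong induction on $n$ that $v(n)\le z_v+n\theta$ for all $v\in V$ and all $n\ge 1$. The base $n=1$ is exactly the first inequality $z_v\ge c_v-\theta$. For the step, given $M(v)=(u,w)$, pick the $m$ attaining $v(n)=u(n-m)+w(m)$; applying the inductive hypothesis to both summands and then the second inequality $z_v\ge z_u+z_w$ yields $v(n)\le z_v+n\theta$. Maximising over $v$ gives $g(n)\le(\max_v z_v)+n\theta$, so $\lambda=\lim g(n)/n\le\theta$, and hence $\lambda\le\theta^*$.

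For $\theta^*\le\lambda$, fix any $\theta>\lambda$ and set
\[
z_v:=\sup_{n\ge 1}\bigl(v(n)-n\theta\bigr).
\]
This supremum is finite: since $v(n)\le g(n)$ and $g(n)/n\to\lambda<\theta$, eventually $v(n)-n\theta\le n(\lambda-\theta)/2$, which tends to $-\infty$, so only finitely many terms can be positive. Taking $n=1$ in the defining sup gives $z_v\ge c_v-\theta$. For the second constraint, the inequality $v(k+m)\ge u(k)+w(m)$ for every $k,m\ge 1$ (whenever $M(v)=(u,w)$) yields
\[
z_v\;\ge\;\sup_{k,m\ge 1}\bigl((u(k)-k\theta)+(w(m)-m\theta)\bigr)\;=\;z_u+z_w.
\]
Hence $(z,\theta)$ is feasible, so $\theta^*\le\theta$; letting $\theta\downarrow\lambda$ gives $\theta^*\le\lambda$.

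The only delicate point, and the main obstacle, is precisely this finiteness of the supremum defining $z_v$, which is why the construction is carried out for $\theta>\lambda$ and not at $\theta=\lambda$ directly: at $\theta=\lambda$ the sequence $v(n)-n\lambda$ could a priori be unbounded above even though $v(n)/n\to\lambda_v\le\lambda$ (both limits exist by Theorem~\ref{thm:lim_exists}). The strict inequality $\theta>\lambda$ forces $v(n)-n\theta\to-\infty$ and thereby makes the supremum finite; the subsequent passage $\theta\downarrow\lambda$ recovers the matching bound.
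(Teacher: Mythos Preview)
Your proof is correct, but the route is genuinely different from the paper's.

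For the direction $\lambda\le\theta^*$ both arguments are essentially the same: your induction on $n$ that $v(n)\le z_v+n\theta$ is just the unwinding of the paper's one-line argument with the composition tree for $g(n)$.

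The other direction is where the approaches diverge. You take any $\theta>\lambda$, set $z_v=\sup_{n\ge 1}(v(n)-n\theta)$, and check feasibility; the crucial finiteness of this supremum comes from Theorem~\ref{thm:lim_exists}. The paper instead works directly at the optimal value $\theta=\theta^*$: it introduces the notion of \emph{decomposable} functions and a \emph{decomposition graph}, argues that if this graph were acyclic one could perturb $\theta$ downward (contradicting optimality), and from the resulting cycle extracts a pseudo-loop of rate exactly $\theta^*$. This yields $g(n)\ge n\theta^*+O(1)$ without invoking any prior existence result.

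What each approach buys: your argument is shorter and more elementary, but it leans on Theorem~\ref{thm:lim_exists}, so it does not independently establish the ``growth rate exists'' part of the statement; in the paper this theorem is advertised as one of four stand-alone proofs of the limit. The paper's argument is self-contained, gives the two-sided bound $g(n)-n\lambda=O(1)$ (stronger than mere convergence of $g(n)/n$), and is constructive: from an optimal LP solution one reads off a pseudo-loop of rate $\lambda$ in linear time, which is the content of the subsequent remark and of the discussion of the dual program. Your limiting construction $\theta\downarrow\lambda$ does not produce a feasible $z$ at $\theta=\lambda$ itself, so it misses this constructive link between the LP and pseudo-loops.
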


A proof is given in Section \ref{sec:equiv_to_linear_prog}. The linear program has $|V|+1$ variables and $2|V|$ inequalities but it is still rather simple and actually quite resembles the setting of the problem. In fact, given a solution of all variables, one can construct a pseudo-loop with the growth rate $\lambda$ in linear time of the number of functions (variables) by a method provided during the course of the proof. A more precise order of $g(n)$ is also shown there: The difference $g(n)-n\lambda$ is bounded. A corollary of this fact is a small interval bounding $\lambda$ provided the value of $g(n)$ for an $n$ large enough. 

Also in Section \ref{sec:equiv_to_linear_prog}, we consider the dual linear program. An interesting point is that given a pseudo-loop of the rate $\lambda$, we can give a solution to the dual program in linear time of the variables. Moreover, the construction is more straightforward than the other direction with the original program.

We now consider some computational aspects of the growth rate.
\begin{theorem}
	\label{thm:rate-test}
	Given any proposal $\lambda_0$, one can decide if $\lambda_0<\lambda$ in quadratic time of the number of functions. 
\end{theorem}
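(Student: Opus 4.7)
The plan is to reduce the test of $\lambda_0 < \lambda$ to a feasibility question for the linear program of Theorem \ref{thm:equiv_to_linear_prog}. Fixing $\theta = \lambda_0$ turns that LP into the system
\[
z_v \ge c_v - \lambda_0, \qquad z_v \ge z_u + z_w \text{ whenever } M(v)=(u,w),
\]
and Theorem \ref{thm:equiv_to_linear_prog} says the system has a solution in $\mathbb{R}^{|V|}$ if and only if $\lambda_0 \ge \lambda$. So it suffices to decide feasibility of this system in quadratic time.

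I would do so by a monotone value iteration: set $z_v^{(0)} := c_v - \lambda_0$ and
\[
z_v^{(k+1)} := \max\bigl(z_v^{(k)},\, z_u^{(k)} + z_w^{(k)}\bigr), \qquad M(v)=(u,w).
\]
The iterates are non-decreasing in $k$, and by induction on $k$ one checks that $z_v^{(k)}$ equals the maximum of $\mathrm{eval}(T) - \lambda_0\,\ell(T)$ taken over composition trees $T$ rooted at $v$ of depth at most $k$, where $\ell(T)$ is the number of leaves of $T$. Feasibility of the system is therefore equivalent to this supremum being finite for every $v$, which is in turn equivalent to $\lambda_0 \ge \lambda$ by Theorem \ref{thm:equiv_to_linear_prog}.

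The crux is the claim that $|V|$ iterations already achieve the supremum when $\lambda_0 \ge \lambda$. Suppose a composition tree $T$ contains a root-to-leaf path on which some label is repeated; the two repetitions enclose an inner pseudo-loop whose rate is at most $\lambda_0$, by Theorem \ref{thm:supremum} together with the assumption $\lambda_0 \ge \lambda$. Contracting this inner pseudo-loop---replacing the outer of the two equal-label subtrees by the inner one---changes the excess $\mathrm{eval}(T)-\lambda_0\,\ell(T)$ by $d\,(\lambda_0 - r)\ge 0$, where $r$ is the rate of the inner pseudo-loop and $d$ is its number of off-main-path leaves. Moreover, the labels on any root-to-leaf path of the new tree form a subset of the labels on the corresponding old path, so no new repetition is created. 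Iterating the contractions therefore terminates with a tree of depth at most $|V|-1$, since every root-to-leaf path in the final tree has distinct labels. Thus the supremum is attained by a bounded-depth tree, so $z_v^{(|V|)}$ already equals the least fixed point, and $z_v^{(|V|+1)} = z_v^{(|V|)}$. Conversely, when $\lambda_0 < \lambda$, any stability $z_v^{(|V|+1)} = z_v^{(|V|)}$ would yield a finite feasible solution and contradict Theorem \ref{thm:equiv_to_linear_prog}, so some coordinate must strictly increase in round $|V|+1$.

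The algorithm therefore performs the update $|V|+1$ times and outputs $\lambda_0 < \lambda$ exactly when some $z_v$ strictly increases in the last round. A single round visits every vertex and does $O(1)$ work per vertex, so each round costs $O(|V|)$ and the entire test costs $O(|V|^2)$. I expect the main obstacle to be the contraction argument: one must verify carefully that contracting an inner pseudo-loop inside a general composition tree (not just inside another pseudo-loop, as in Theorem \ref{thm:maximum}) again produces a valid composition tree with the asserted effect on the excess, that no new label repetition is introduced along any root-to-leaf path, and that the process terminates in finitely many steps.
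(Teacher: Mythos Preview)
Your argument is correct. The reduction is the same as the paper's---shift all starting values by $\lambda_0$ and test whether the resulting system has positive growth rate, equivalently whether the linear system at $\theta=\lambda_0$ is feasible---but the algorithm you run on the shifted system is different. You use a synchronous Bellman--Ford style iteration and bound the number of rounds by a contraction argument: when $\lambda_0\ge\lambda$, every inner pseudo-loop has rate $\le\lambda_0$ by Theorem~\ref{thm:supremum}, so contracting it never lowers the excess, and repeated contraction (which strictly decreases the leaf count, hence terminates) leaves a tree of depth $<|V|$; thus the iterates stabilise by round $|V|$. The paper instead runs an asynchronous update with explicit dependency tracking: each $z_v$ is updated at most once from its initial value, a chain of dependent updates is propagated, and a positive-rate pseudo-loop is detected as a dependency cycle. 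Both approaches yield $O(|V|^2)$. Your version is cleaner to state and analyse and leans only on Theorems~\ref{thm:supremum} and~\ref{thm:equiv_to_linear_prog}; the paper's version has the side benefit that the detected dependency cycle directly exhibits a pseudo-loop of positive rate, and the fixed-point values it computes are immediately identified with the solution $\{z_v\}$ of System~\eqref{eq:ineq_sys} at $\theta=\lambda_0$ (cf.\ the remark closing Section~\ref{sec:rate_test}).
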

Theorem \ref{thm:rate-test} immediately gives a reasonable algorithm of approximating $\lambda$ with the time complexity $O(-|V|^2 \log \epsilon)$ for a given precision $\epsilon$. In the proof in Section \ref{sec:rate_test}, the readers will find that $\lambda_0\ge\lambda$ is equivalent to whether each function $v$ has a maximum value of $v(n)$ over all $n$ when the considered system uses the value $c_u-\lambda_0$ instead of $c_u$ for every $u$. Such maximum values are also computed as a by-product. When $\lambda_0=\lambda$, these maximum values turn out to be a solution of the linear program for $\theta=\lambda$. It means the value of $\lambda$ alone can give a solution (of other variables) to the linear program, which in turn gives a pseudo-loop of the same rate.

\begin{theorem}
	\label{thm:cubic-time}
	When the starting values are all rational, we can compute the growth rate precisely in cubic time of the number of functions.
\end{theorem}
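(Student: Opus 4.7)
The plan is to combine the quadratic-time rate test of Theorem~\ref{thm:rate-test} with a binary search whose stopping rule exploits the fact that $\lambda$ is a rational with controlled denominator. First I would establish an initial bracket: since every ball contributes a value in $[\min_v c_v, \max_v c_v]$, the growth rate itself lies in this interval, so we begin the search with a bracket of length bounded in terms of the input.

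The central structural input is a bound on the denominator of $\lambda$. By Theorem~\ref{thm:maximum}, there exists a pseudo-loop of rate exactly $\lambda$ with at most $N := |V|\cdot 2^{|V|-1}$ non-marked leaves, and its rate is (sum of values of those leaves)/(number of those leaves). Writing the $c_v$ over a common denominator $D$, we obtain $\lambda = P/Q$ for integers $P,Q$ with $Q \le N\cdot D$. Under the natural model in which the rational starting values have input size bounded polynomially (and in particular the bit-length of $D$ is $O(|V|)$), this gives $\log Q = O(|V|)$, which is exactly the magic factor that makes the overall algorithm cubic.

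Binary search then proceeds by probing a rational $\lambda_0$ in the current interval and using Theorem~\ref{thm:rate-test} to decide whether $\lambda_0 < \lambda$, in $O(|V|^2)$ time per probe. Since two distinct rationals with denominators at most $Q$ differ by at least $1/Q^2$, once the interval has length strictly less than $1/Q^2$ it contains a unique candidate for $\lambda$. Reaching this precision requires $O(\log Q^2) = O(|V|)$ probes, giving a total cost of $O(|V|\cdot |V|^2) = O(|V|^3)$.

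To read off the exact value of $\lambda$ from the final short interval, I would use the standard continued-fraction / Stern--Brocot procedure that locates the unique rational in a given interval with denominator at most a prescribed bound; this runs in $O(|V|)$ time and is dominated by the binary search. The main obstacle is the denominator bound: one needs an exponential bound of the form $c^{|V|}$ so that its logarithm is linear, and Theorem~\ref{thm:maximum} supplies exactly such a bound. A minor secondary point is the design of the stopping condition, which must certify that the current interval is strictly narrower than $1/Q^2$ using only $O(|V|)$ iterations; this is handled by tracking the interval length explicitly and halting as soon as it drops below the precomputed threshold $1/Q^2$.
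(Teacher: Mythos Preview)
Your proposal is correct and follows essentially the same route as the paper: bound the denominator of $\lambda$ via Theorem~\ref{thm:maximum}, binary-search with the quadratic rate test of Theorem~\ref{thm:rate-test} down to the minimum gap $1/B^2$, and then extract the unique admissible fraction by a Farey/Stern--Brocot step. The only notable difference is that the paper first scales all $c_v$ to integers, so the denominator bound becomes $B=|V|2^{|V|-1}$ outright and no assumption on the bit-length of the common denominator $D$ is needed; your extra hypothesis that $\log D = O(|V|)$ is therefore avoidable.
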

Theorem \ref{thm:cubic-time} is a combination of the results in Theorem \ref{thm:maximum} and Theorem \ref{thm:rate-test}. The idea is that the growth rate is a fraction with the denominator not too big, therefore, one can stop the binary search when the interval is small enough. Details are given in Section \ref{sec:cubic-time}. How to compute the growth rate efficiently in case the starting values are not necessarily rational, such as $\pi,e,\dots$, is still open. Note that the straight algorithm by Theorem \ref{thm:maximum} may take double exponential time.

As for some final remarks on possible extensions, inspired by the context free grammar, one may extend the setting to allow nondeterministic replacements in the sense that one may choose among several rules to apply to a ball. The readers can check that the approach still works in the new setting. Although as pointed out before that the replacement by more than two balls may result in divergence, we can still obtain convergence if we consider $n$ as the number of steps instead of the number of balls, by the same approach. However, the approach may not work in the case of an infinite $V$. This direction opens a large room for further research.

\section{Growth rate in terms of the functions}
\label{sec:lim_exists}
To prove the growth rate $\lambda$ and $\lambda_v$ for every function $v$ exist, we give first the following lemma, which is kind of in the same spirit as Fekete's lemma, and should be of its own interest. 

For convenience, in the statement of the lemma and in the proof, all the integers that are supposed to be used for indexing functions $u,v$ will be treated as elements in $\mathbb Z / k\mathbb Z$. In particular, it is the case of the indices $i, i^*,j$. 

\begin{lemma}
	\label{lem:round_fekete}
	Given $2k$ ($k\ge 1$) functions $v_0, \dots, v_{k-1}, u_0, \dots, u_{k-1}: \mathbb N^+\to \mathbb R$ such that for every $0\le i\le k-1$ and every $n\ge 2$,
	\[
		v_i(n) = \max_{1\le m\le n-1} v_{i+1}(n-m) + u_{i+1}(m).
	\]
	Then for every $i$,
	\[
		\lim_{n\to\infty} \frac{v_i(n)}{n} = \sup_{m_0\ge 1, \dots, m_{k-1}\ge 1} \frac{\sum_{j=0}^{k-1} u_j(m_j)}{\sum_{j=0}^{k-1} m_j}.
	\]
\end{lemma}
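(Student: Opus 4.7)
The plan is to unfold the cyclic recursion $k$ steps at a time and compare the two sides of the equality directly. Write $S$ for the supremum on the right-hand side. Applying the defining relation once advances the index from $i$ to $i+1$; applying it $k$ times returns to $i$ and picks up one factor of each of $u_0, \dots, u_{k-1}$ along the way, in a cyclic order determined by the starting index. Concretely, for any $m_0, \dots, m_{k-1} \ge 1$ with $T := \sum_j m_j \le n-1$, iterating the recursion and relabeling cyclically yields
\[
v_i(n) \;\ge\; v_i(n - T) + \sum_{j=0}^{k-1} u_j(m_j),
\]
while unfolding all the way down to argument $1$ gives the identity
\[
v_i(n) \;=\; \max \Bigl\{ v_{i+q}(1) + \sum_{j=1}^{q} u_{i+j}(m_j) \;:\; q \ge 1,\ m_j \ge 1,\ \sum_{j=1}^q m_j = n-1 \Bigr\}.
\]

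For the lower bound I fix any tuple $(m_0, \dots, m_{k-1})$, let $B = \sum_j u_j(m_j)$, and iterate the first display to obtain $v_i(qT + r) \ge v_i(r) + qB$ for every $r \in \{1, \dots, T\}$. Since the finitely many starting values $v_i(1), \dots, v_i(T)$ are bounded below, dividing by $n = qT + r$ and letting $n \to \infty$ gives $\liminf_{n\to\infty} v_i(n)/n \ge B/T$. Taking the supremum over all tuples yields $\liminf v_i(n)/n \ge S$.

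For the upper bound I start from the displayed identity and partition the $q$ summands into $a = \lfloor q/k \rfloor$ consecutive blocks of length $k$, with $b = q - ak < k$ leftover terms at the tail. Each complete block contains exactly one copy of every $u_j$ in cyclic order, so by the definition of $S$, the sum over that block is at most $S$ times the sum of the corresponding $m_j$'s. The leftover terms are the crux: a single quantity $u_j(m)$ need not satisfy $u_j(m) \le Sm$ on its own, since $S$ is defined only jointly over full $k$-tuples. I would handle this by a padding trick, applying the definition of $S$ to the tuple that has $m$ in slot $j$ and $1$'s elsewhere:
\[
u_j(m) \;\le\; S(m + k - 1) - \sum_{\ell \ne j} u_\ell(1) \;\le\; Sm + C,
\]
where $C$ depends only on $S$, $k$, and the finitely many values $u_\ell(1)$. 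Summing the block bounds together with at most $k-1$ such padded leftover bounds gives
\[
\sum_{j=1}^q u_{i+j}(m_j) \;\le\; S\sum_{j=1}^q m_j + (k-1)C \;=\; S(n-1) + (k-1)C,
\]
and hence $v_i(n) \le \max_{j} v_j(1) + S(n-1) + (k-1)C$, so $\limsup v_i(n)/n \le S$.

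The main obstacle is precisely this control of the leftover terms: since $S$ is a joint supremum over $k$-tuples, there is no obvious uniform linear bound on an individual $u_j(m)$, and the padding trick is what recovers one up to an additive constant. The degenerate cases are easy: $S = +\infty$ follows from the lower bound alone, and $S > -\infty$ is automatic because the tuple $(1, \dots, 1)$ already gives a finite ratio, so $C$ is finite whenever it is needed.
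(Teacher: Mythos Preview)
Your proof is correct. The lower bound is identical to the paper's. For the upper bound, both proofs unfold $v_i(n)$ completely and group the resulting $u$-terms into full cycles of length $k$ plus a short tail, and both handle the tail by padding with $1$'s. The difference is in how the padding is deployed: the paper pads the incomplete final round up to a full $k$-tuple and then argues by contradiction and averaging (if $\limsup v_i(n)/n > R$, some completed round must already have ratio exceeding $R$). You instead pad each leftover term $u_j(m)$ individually into its own $k$-tuple to extract the pointwise bound $u_j(m) \le Sm + C$, and then sum directly. Your route avoids the contradiction step and the $\epsilon$--$\epsilon'$ bookkeeping, and it yields the explicit inequality $v_i(n) \le Sn + O(1)$, which is slightly more than the bare limit statement. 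The two arguments are close cousins.
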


\begin{proof}
	Denote by $R$ the value of the supremum (note that it can be infinite). To prove the theorem, it suffices to verify the following two points for every $i$:

	(i) $\liminf_{n\to\infty} v_i(n)/n \ge R$.

	By the definition of $R$, for any $R'<R$, there are $m_0, \dots, m_{k-1}$ such that
	\[
		\frac{\sum_{j=0}^{k-1} u_j(m_j)}{\sum_{j=0}^{k-1} m_j} > R'.
	\]
	Let $m=m_0+\dots+m_{k-1}$. For every $n$, if $n=mt+p$ for some integer $t$ and $1\le p\le m$, we have the lower bound $v'_i(n) \le v_i(n)$ with
	\[
		v'_i(n) = v_i(p) + t\left(\sum_{j=0}^{k-1} u_j(m_i)\right).
	\]

	Since $v_i(p)$ is bounded, the sequence $\{v'_i(n) / n\}_n$ tends to $\left(\sum_{j=0}^{k-1} u_j(m_j)\right)/\left(\sum_{j=0}^{k-1} m_j\right)$, which is greater than $R'$. It follows that $\liminf_{n\to\infty} v_i(n)/n > R'$ for any $R'<R$, which implies $\liminf_{n\to\infty} v_i(n)/n \ge R$.

	(ii) $\limsup_{n\to\infty} v_i(n) / n \le R$ (we assume $R\ne\infty$ otherwise it is trivial).
	
	Assume $\limsup_{n\to\infty} v_i(n)/n = R' > R$, we will show a contradiction by giving $m_0\ge 1, \dots, m_{k-1}\ge 1$ so that
	\[
		\frac{\sum_{j=0}^{k-1} u_j(m_j)}{\sum_{j=0}^{k-1} m_j} > R.
	\]

	For each $i$ and $n$, due to the evaluation of $v_i(n)$, there exist a number $t$ and $tk$ numbers $m_j^{(s)}$ for $0\le j\le k-1$, $1\le s\le t$ such that: $\sum_{j,s} m_j^{(s)} = n-1$, all of them are nonzero except possibly $m_{i^*}^{(t)}, m_{i^*+1}^{(t)}, \dots, m_{i}^{(t)}$ for some $i^*$ (if there is no zero, we let $i^*=i+1$), and
	\[
		v_i(n) = v_{i^*-1}(1) + \sum_{s=1}^t \sum_{j=0}^{k-1} u_j(m_j^{(s)}),
	\]
	where $u_j(0)$ is assumed to be zero for every $j$. (The number $t$ can be understood as the number of rounds.)

	Let ${m'}_j^{(s)}=m_j^{(s)}$, but we set ${m'}_{i^*}^{(t)} = {m'}_{i^*+1}^{(t)} = \dots = {m'}_{i}^{(t)} = 1$ if there are corresponding zeros in $\{m_j^{(s)}\}$, we have 
	\begin{equation}
		\label{eq:new_eval}
		\sum_{s=1}^t \sum_{j=0}^{k-1} u_j({m'}_j^{(s)}) = v_i(n) - v_{i^*-1}(1) + \sum_{j=i^*}^i u_j(1).
	\end{equation}

	By the definition of $R'$, for every $\epsilon>0$, there is an arbitrarily large $n$ such that  
	\[
		\frac{v_i(n)}{n} > R'-\epsilon.
	\]

	Note that the right hand side of Equation \eqref{eq:new_eval} is the sum of $v_i(n)$ and a bounded sum, and the difference between the sum of all ${m'}_j^{(s)}$ and the sum of all $m_j^{(s)}$ is also bounded. It means that for every $\epsilon'>0$, we can choose a small enough $\epsilon$ and a large enough $n$ such that
	\[
		\sum_{s=1}^t \sum_{j=0}^{k-1} u_j({m'}_j^{(s)}) > (R'-\epsilon')\left(\sum_{s=1}^t \sum_{j=0}^{k-1} {m'}_j^{(s)}\right).
	\]
	This is followed by the existence of some $s^*$ such that
	\[
		\frac{\sum_{j=0}^{k-1} u_j({m'}_j^{(s^*)})}{\sum_{j=0}^{k-1} {m'}_j^{(s^*)}} > R'-\epsilon'.
	\]

	Since $\epsilon'$ can be arbitrarily small, $R'-\epsilon' > R$ for some $\epsilon'$, and since all ${m'}_j^{(s^*)}\ge 1$, we have a contradiction with the supremum $R$.

	By (i) and (ii), the conclusion follows.
\end{proof}

Now we can prove Theorem \ref{thm:lim_exists}.

Consider the partial order between the strongly connected components of the dependency graph, as already pointed out in Section \ref{sec:introduction}, the minimal component cannot be a single component. Therefore, each function in a minimal component should be in a cycle and the existence of its growth rate is confirmed by Lemma \ref{lem:round_fekete}. Consider a non-minimal component with the assumption that we already have growth rates for the functions in all smaller components. If the considered component is not single, then every function has a growth rate as already reasoned. In the other case, the only function $v$ in the component has $M(v)=(u,w)$ with $u,w$ from smaller components, hence they already have growth rates by induction hypothesis. Since $v(n)=\max_m u(n-m) + w(m)$, the larger rate of $u$ and $w$ is the growth rate of $v$. By induction, all functions have growth rates. It follows from $g(n) = \max_v v(n)$ that $g(n)$ also has a growth rate, which is the largest rate over all the functions $v$.

\begin{remark}
	Although Lemma \ref{lem:round_fekete} also covers the case the limit is infinite, the limits in our application are obviously finite since the value $v(n)/n$ for any function $v\in V$ is always contained in the range of the minimum and maximum starting values.
\end{remark}

\section{Growth rate as the maximum rate over all the pseudo-loops}
\label{sec:over_all_loops}
\subsection*{Growth rate as the supremum rate}
We prove Theorem \ref{thm:supremum} in different ways, one removes inner pseudo-loops while the other extends a tree to a pseudo-loop.

At first, it is obvious that $\liminf_{n\to\infty} g(n)/n \ge \sup_T \lambda_T$, where $\lambda_T$ is the rate of a pseudo-loop $T$.
Indeed, consider a pseudo-loop $T$ and let $q$ be the number of leaves of $T$ excluding the marked one. For every $n$, let $n$ be expressed as $n=pq+r$ for an integer $p$ and $1\le r\le q$, it can be seen that $g(n)\ge pq\lambda_T + O(1)$ by considering the tree obtained from $T^p$ by replacing the marked leaf of $T^p$ by any tree of $r$ leaves. The corresponding lower bound of $g(n)/n$ converges to $\lambda_T$, the conclusion follows.

Let $\bar\lambda=\limsup_{n\to\infty} g(n)/n$ and $\lambda^*=\sup_T \lambda_T$, it remains to prove that
\[
    \bar\lambda\le\lambda^*.
\]

\begin{proof}[Proof of Theorem \ref{thm:supremum} that removes inner pseudo-loops]
	Assume the contrary that $\bar\lambda > \lambda^*$, we give a contradiction by the existence of a pseudo-loop with a higher rate than $\lambda^*$.

An inner-pseudo-loop-free tree has a bounded number of leaves. In other words, any tree of many enough leaves has an inner pseudo-loop. Subsequently removing all inner pseudo-loops results in an inner-pseudo-loop-free tree. The value of the original tree is the sum of the values of all removed inner pseudo-loops and the new tree. 

	By the definition of $\bar\lambda$, for every $\epsilon>0$ and any $N_0$, there exists some $N>N_0$ so that $g(N)/N>\bar\lambda - \epsilon$.

	Choose some $\epsilon$ small enough and consider such a large $N$.
As the tree has the value at least $N(\bar\lambda-\epsilon)$, we have the sum of the values of all the removed inner pseudo-loops is $N(\bar\lambda-\epsilon) - O(1)$, where $O(1)$ is the value of the new tree.

Since the total number of leaves of the pseudo-loops is $N-O(1)$, there must be a pseudo-loop of rate at least the average
\[
	\frac{N(\bar\lambda-\epsilon)-O(1)}{N-O(1)}.
\]

When $N$ is large enough and $\epsilon$ is small enough, the above average is arbitrarily close to $\bar\lambda$, hence greater than $\lambda^*$, contradiction.
\end{proof}

The other proof is a bit sketchy as follows.
\begin{proof}[Proof of Theorem \ref{thm:supremum} that extends a tree to a pseudo-loop]
	If there is a path from $u$ to $v$ then there is a composition tree $T(u,v)$ of a bounded number of leaves (and value) so that the root is labeled $u$ and one of the leaves is labeled $v$.

	If $g(N)>N(\bar\lambda-\epsilon)$ corresponds to a tree of $N$ leaves with the root labeled $v$ and a leaf labeled $u$ so that $u,v$ are in the same component, then replacing the leaf by $T(u,v)$, we obtain a pseudo-loop with the rate at least
	\begin{equation} \label{eq:root-leaf-same-comp}
		\frac{N(\bar\lambda-\epsilon)+O(1)}{N+O(1)},
	\end{equation}
	which is greater than $\lambda^*$ when $N$ is large enough and $\epsilon$ is small enough.

	If no leaf has the label in the same component as the label $v$ of the root, we consider a subtree $T'$ of $T$ such that $|T|/3\le |T'|\le 2|T|/3$, where $|T|$ is the number of leaves of $T$.
	The value of $T'$ is at most $|T'|(\bar\lambda+\epsilon)$ when we choose $N$ large enough. If the root of $T'$ has the label in the same component as $v$, then we have the same situation as in \eqref{eq:root-leaf-same-comp}. Indeed, the value of the tree $T_0$ obtained from $T$ by contracting $T'$ into a single leaf would have the value at least
	\[
		N(\bar\lambda-\epsilon) - |T'|(\bar\lambda+\epsilon) + O(1).
	\]

	If the label of the root of $T'$ is in a lower component than the component of $v$, then we have the same problem for $T'$ with the labels of the vertices being in one less components than $T$ and the value of $T'$ at least 
	\[
		N(\bar\lambda-\epsilon) - |T_0|(\bar\lambda+\epsilon) + O(1),
	\]
	since the value of $T_0$ is at most $|T_0|(\bar\lambda+\epsilon)$.

	Recursively treating smaller problems with $N$ large enough and $\epsilon$ small enough would give a situation where there is a leaf having the label in the same component as the label of the root.
\end{proof}

In the latter proof, there is another way to treat the situation where there is no leaf having the label in the same component as the root. It is done by the following lemma.

\begin{lemma}
	\label{lem:bounded-partition}
	Let $G$ be the dependency graph and $A$ any composition tree. Then there exist a bounded number of disjoint subtrees of $A$ such that they cover all leaves of $A$ and in each subtree the label of the root is in the same component as the label of a leaf. In particular, a tight bound is $2^c$ where $c$ is the number of single components in $G$.
\end{lemma}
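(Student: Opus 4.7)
The plan is to induct on a component-based parameter attached to the root label. For each vertex $v$ of the dependency graph let $s(v)$ denote the number of single components that $v$ can reach in the condensation, counting $v$'s own component when it is itself single. Since $s(v) \le c$ for every $v$, it will suffice to prove by induction on $s(v)$ that any composition tree with root labeled $v$ can be partitioned into at most $2^{s(v)}$ valid subtrees (that is, subtrees whose root label and some leaf label lie in a common component).

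The first and most important step handles the case in which $v$'s component $C$ is non-single. Here I claim that every composition tree rooted at $v$ already has a leaf whose label is in $C$, so the whole tree is one valid subtree and we obtain $1 \le 2^{s(v)}$. The reason is that $v$ always has at least one child in $C$ under the dependency graph: either $C = \{v\}$ with a self-loop at $v$, in which case $v$ itself occurs in $M(v)$, or $C$ is strongly connected of size at least $2$, in which case $v$ lies on a directed cycle inside $C$ and so one of the two arcs leaving $v$ must remain in $C$. Following such an in-$C$ child at each internal vertex of the composition tree produces a root-to-leaf path all of whose labels are in $C$, giving the desired leaf. This also settles the base case $s(v) = 0$, where $v$'s component is necessarily non-single.

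For the complementary case in the inductive step, $v$'s component is single and $s(v) \ge 1$. Writing $M(v) = (u, w)$, both $u$ and $w$ sit in components strictly below $v$'s: the single component $\{v\}$ has no self-loop so $u, w \ne v$, and the acyclicity of the condensation forbids a return to $\{v\}$ from either. Hence $s(u), s(w) \le s(v) - 1$, since the single components reachable from $u$ (or $w$) form a subset of those reachable from $v$ with $\{v\}$ itself excluded. Applying the induction hypothesis to the left and right subtrees of the root and taking the union of the two resulting partitions yields a valid partition of at most $2 \cdot 2^{s(v)-1} = 2^{s(v)}$ subtrees, as required.

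Finally, for tightness I would exhibit the chain defined by $M(v_i) = (v_{i+1}, v_{i+1})$ for $i = 1, \ldots, c-1$, $M(v_c) = (u, u)$, and $M(u) = (u, u)$: here $v_1, \ldots, v_c$ form $c$ single components and $\{u\}$ is non-single owing to its self-loop, and the complete composition tree of depth $c$ rooted at $v_1$ has $2^c$ leaves all labeled $u$, so any valid partition must consist of $2^c$ single-leaf subtrees because no internal vertex of that tree is labeled $u$. The main obstacle I expect is the structural claim in the non-single case, specifically the clean handling of its two subcases (a single vertex with a self-loop versus a strongly connected component of size at least two); once that observation is in hand, the remainder is a routine induction on $s(v)$.
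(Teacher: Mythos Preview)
Your proof is correct and follows essentially the same approach as the paper: both arguments show that a tree rooted at a non-single label already contains a leaf in the same component (by repeatedly passing to an in-component child), and both handle the single-root case by splitting into the two subtrees and inducting on one fewer available single component. Your parameter $s(v)$ is a slightly cleaner formalisation of what the paper does more informally (``each branch can have vertices of the labels from the remaining $c-1$ single components only''), and your tightness example is the same chain construction as the paper's, differing only in indexing and the depth at which the tree is cut off.
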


\begin{proof}
	Consider any subtree with the label of the root not in a single component of $G$. One of the two children must have the label in the same component as the root. If this child is a leaf, then the subtree has the root and one leaf having the labels in the same component. Otherwise, we still have the same situation by recursively following this vertex and its children until we see a leaf having the label in the component. The process will eventually finish due to the finiteness and acyclicity of the tree.

	The above consideration of the root is indeed the case when $c=0$, and one subtree is sufficient to cover as reasoned above. If $c>0$, it maybe the case that the label of the root is in a single component of $G$, then the number of subtrees required is the sum of those numbers in the left branch and the right branch of the root. Since the label in a single component of $G$ cannot be revisited, each branch can have vertices of the labels from the remaining $c-1$ single components only (beside other nonsingle components). By recursively following them, we need at most $2^c$ subtrees to cover. 

	Although we just need this number to be bounded for later usage, this bound is actually tight. For example, let $G$ have $k+1$ vertices $v_0,v_1,\dots, v_k$ with the edges $v_i v_{i+1}$ for $0\le i\le k-1$ and the loop $v_k v_k$. The number of single components in $G$ is $k$. Let $A$ be the perfect binary tree of height $k+1$ and the root (say, at depth $0$) is labeled $v_0$. It follows that the vertices at depth $i$ for $0\le i\le k$ are labeled $v_i$ and the vertices at depth $k+1$, which are all leaves, are labeled $v_k$. One can see that $2^k$ subtrees are needed to cover all the leaves of $A$.
\end{proof}

By the lemma, the leaves of $T$ are covered by some $k$ disjoint subtrees $T_1,\dots,T_k$ for a bounded $k$ so that a leaf in each subtree has the label in the same component as the root. The value of $T$ is the sum of the values of those trees. For each tree $T_i$ with the root labeled $v_i$ and the leaf labeled $u_i$, we transform it to $T'_i$ by replacing the leaf labeled $u_i$ by the tree $T(u_i,v_i)$. The new tree $T'_i$ can be seen as a pseudo-loop, whose value is denoted by $f(T'_i)$. Note that the difference in value and in number of leaves between $T_i$ and $T'_i$ is bounded. That is 
\[
	f(T'_1)+\dots+f(T'_k) \ge N(\bar\lambda -\epsilon) + O(1),
\]
where the quantity $O(1)$ is due to the boundedness of $k$. As the number of leaves (excluding the marked ones) in the pseudo-loops $T'_i$ is $(|T'_1|-1)+\dots+(|T'_k|-1)=N+O(1)$, there is an $i$ so that
\[
	\frac{f(T'_i)}{|T'_i|-1} \ge \frac{N(\bar\lambda -\epsilon) + O(1)}{N+O(1)}.
\]

Note that the left hand side is the rate of $T'_i$. When $\epsilon$ is small enough and $N$ is large enough, the right hand side is greater than $\lambda^*$, contradiction.

\subsection*{Growth rate as the maximum rate}
Although the space of all pseudo-loops is infinite and the supremum of the rates may not belong to any particular pseudo-loop, we show that the latter is not the case by the fact that we just need to look into the set of pseudo-loops that do not contain any removable inner pseudo-loop to find one with the best rate. In other words, we prove Theorem \ref{thm:maximum}, as follows.

\begin{proof}[Proof of Theorem \ref{thm:maximum}]
	In order to prove the theorem, it suffices to show that any pseudo-loop containing a removable inner pseudo-loop does not need to be considered in the sense that there exists a pseudo-loop of fewer leaves with at least that rate. In other words, the space of pseudo-loops to be considered is finite.

	Indeed, if the inner pseudo-loop has a lower or equal rate to the original one, then removing the former does not reduce the rate of the latter. If the inner one has a higher rate, then that inner one itself is a pseudo-loop with a higher rate. In both cases, we can ignore the original pseudo-loop.

	It remains to show that a pseudo-loop without any removable inner pseudo-loop has at most $|V|2^{|V|-1}$ leaves after excluding the marked one. On the main path from the root to the marked leaf, the subpath from the vertex following the root to the marked leaf should not have two vertices of the same label, otherwise we have a removable inner pseudo-loop. That is we have at most $|V|$ vertices on the main path after excluding the marked leaf. For each vertex $p$ on the main path other than the leaf, the subtree whose root is the other child of $p$ than the child on the main path is inner pseudo-loop free. Such a subtree has the depth at most $|V|-1$ and therefore has at most $2^{|V|-1}$ leaves. In total, we have at most $|V|2^{|V|-1}$ leaves after excluding the marked one. 
\end{proof}
\begin{remark}
	The bound $|V|2^{|V|-1}$ may not be a tight bound but we can come up with an example where a pseudo-loop of the rate $\lambda$ must have at least $2^m+1$ leaves after excluding the marked leaf for a set of $m+3$ functions $a,b,v_0,v_1,\dots,v_m$ where $M(a)=(a,b), M(b)=(a,v_0), M(v_0)=(v_1,v_1), M(v_1)=(v_2,v_2), \dots, M(v_{m-1})=(v_m,v_m), M(v_m)=(a,a)$ with $c_a=c_b=c_{v_0}=\dots=c_{v_{m-1}}=0$ and $c_{v_m}=1$. The verification is left to the readers as an exercise. (Hint: The growth rate is $2^m/(2^m+1)$.)
\end{remark}

\section{Growth rate as the solution of a linear program}
\label{sec:equiv_to_linear_prog}
\subsection*{Relation to the original program}
We prove Theorem \ref{thm:equiv_to_linear_prog}.

Let $\theta$ and $\{z_v\}_v$ be a solution to the linear program. We prove the following two claims.

\begin{claim} $g(n)\le n\theta + \max_v z_v$. \end{claim}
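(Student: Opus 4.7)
The plan is to prove the stronger pointwise bound $v(n)\le n\theta+z_v$ for every function $v\in V$ and every $n\ge 1$, and then take the maximum over $v$ to conclude $g(n)=\max_{v\in V}v(n)\le n\theta+\max_{v\in V}z_v$. The advantage of aiming for this per-function refinement is that it carries the correct $z$-value through the recursion as we descend from $v$ to its dependents $u,w$, rather than attempting to control $g(n)$ monolithically.

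The argument is a direct induction on $n$. For the base case $n=1$, the definition $v(1)=c_v$ together with the first family of constraints in System \eqref{eq:ineq_sys}, namely $z_v\ge c_v-\theta$, immediately gives $v(1)=c_v\le\theta+z_v$. For the inductive step $n\ge 2$, write $M(v)=(u,w)$. The recursion $v(n)=\max_{1\le m\le n-1}u(n-m)+w(m)$ combined with the inductive hypothesis applied to both $u$ and $w$ yields, for every valid $m$,
\[
u(n-m)+w(m)\le (n-m)\theta+z_u+m\theta+z_w = n\theta+z_u+z_w.
\]
The second family of constraints $z_v\ge z_u+z_w$ then closes the induction, giving $v(n)\le n\theta+z_v$, and taking the maximum over $v$ produces the claim.

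I do not anticipate any real obstacle: the two families of inequalities in System \eqref{eq:ineq_sys} are essentially tailored to match, term by term, the base case and the recursive step defining $v(n)$, so the induction is almost purely mechanical. The only subtlety worth flagging is precisely the choice to bound each $v(n)$ individually by $n\theta+z_v$ rather than to aim at $g(n)$ directly; without this refinement the induction does not close, since the children $u,w$ of $v$ need not be the functions that attain $\max_{v\in V}z_v$.
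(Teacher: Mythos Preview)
Your proof is correct and is essentially the same as the paper's: the paper argues via the composition tree realizing $g(n)$, repeatedly applying $z_v\ge z_u+z_w$ down to the leaves and then $z_v\ge c_v-\theta$ at each leaf to obtain $z_{v^*}\ge g(n)-n\theta$, which is just your induction on $n$ unrolled to the tree structure. Both proofs in effect establish the same per-function bound $v(n)\le n\theta+z_v$ using the two families of constraints in the obvious way.
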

\begin{proof}
For each $n$, consider the composition tree corresponding to $g(n)$ and let the label of the root be $v^*$. Let $L$ be the multiset of the labels of the leaves in the composition tree. Since $z_v\ge c_v - \theta$ and $z_v\ge z_u+z_w$ for any $v$ and $M(v)=(u,w)$, we have
\[
	z_{v^*} \ge \sum_{u\in L} (c_u - \theta) = g(n) - n\theta \implies g(n) \le z_{v^*} + n\theta,
\]
which confirms the claim.
\end{proof}

\begin{claim} $g(n) \ge n\theta + O(1)$. \end{claim}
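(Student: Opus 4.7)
The plan is to construct, from the optimal solution $(\theta,\{z_v\})$, an explicit pseudo-loop $P$ whose rate equals $\theta$, and then apply the iteration argument that opens Section~\ref{sec:over_all_loops}: from any pseudo-loop of rate $\theta$ with $q$ non-marked leaves, writing $n = pq + 1 + r$ with $0 \le r \le q-1$ and replacing the marked leaf of $P^p$ by any composition tree with $r+1$ leaves rooted at the root label of $P$, one obtains $g(n) \ge pq\theta + O(1) = n\theta + O(1)$.

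First I would normalize $\{z_v\}$ to be component-wise minimal by iteratively replacing $z_v$ with $\max(c_v-\theta,\, z_u+z_w)$; this preserves feasibility and converges to a fixed point at which every $v$ satisfies at least one of the two tight relations, which I call A-tight if $z_v = c_v - \theta$ and B-tight if $z_v = z_u+z_w$ with $M(v) = (u,w)$. Call a composition tree \emph{tight} when every internal node is B-tight and every leaf is A-tight. Telescoping the two tight relations down to the leaves shows that a tight tree rooted at $v$ with $n$ leaves has value exactly $z_v + n\theta$.

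Next I would exploit the optimality of $\theta$ to locate a cycle in the B-digraph $H$ whose edges go from every B-tight $v$ to both entries of $M(v)=(u,w)$. Were $H$ acyclic, one could solve the perturbation system $\Delta z_v \ge 1$ at every A-tight $v$ and $\Delta z_v \ge \Delta z_u + \Delta z_w$ at every B-tight $v$ inductively from sinks upward with finite increments, whereupon $\theta$ could be decreased by the same small amount, contradicting optimality. A similar perturbation argument shows that from every B-tight vertex a finite tight tree exists --- otherwise the would-be infinite expansion stays in a closed set $W$ of vertices that are B-tight but not A-tight, within which $\{z_w\}_{w\in W}$ can be kept fixed while $\theta$ is reduced. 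Starting from any $v_0$ on a cycle of $H$, I build $P$ by walking this cycle: at each step expand $v_i$ via its B-tight relation, follow the designated child $v_{i+1}$ along the main path, and retain the sibling $s_i$ as a side branch; the returning $v_0$-leaf is marked. Any A-tight sibling is already a valid leaf, and any B-only sibling is expanded into its finite tight tree.

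By construction $P$ has all internal nodes B-tight and all non-marked leaves A-tight. Telescoping then yields $z_{v_0} = \sum_{\text{all leaves}} z_{\text{leaf}}$; isolating the marked leaf's contribution $z_{v_0}$ gives $\sum_{\text{non-marked}} z_{\text{leaf}} = 0$, and substituting $z_{\text{leaf}} = c_{\text{leaf}} - \theta$ for each non-marked leaf yields $\sum_{\text{non-marked}} c_{\text{leaf}} = q\theta$, so the rate of $P$ is exactly $\theta$. The main obstacle is the preceding paragraph --- rigorously extracting both the cycle in $H$ and the finiteness of the side tight trees from the optimality of $\theta$; this is precisely where the LP is used essentially, and where the linear-time construction promised in the introduction sits.
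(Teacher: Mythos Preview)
Your plan is essentially the paper's: extract from the optimal LP solution a cycle of B-tight vertices and hang decomposable subtrees off as siblings to obtain a pseudo-loop of rate $\theta$. The paper organizes this slightly differently: its decomposition graph $G$ already requires the sibling to be decomposable before an edge is drawn, so a single perturbation argument (if $G$ is acyclic then $\theta$ can be decreased) simultaneously yields the cycle and the decomposability of every sibling along it. You instead work with the larger digraph $H$ and split the work into two perturbations, which is fine in principle.

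The gap is in your second perturbation. Keeping $\{z_w\}_{w\in W}$ fixed while reducing $\theta$ does \emph{not} preserve feasibility: the constraint $z_v \ge c_v - \theta$ becomes stricter for every $v$, so any A-tight vertex $v\notin W$ (and such vertices exist --- they are exactly the leaves of your finite tight trees) is immediately violated. You cannot repair this by raising $z_v$ for $v\notin W$, because any $w\in W$ that is B-tight with one child $v\notin W$ satisfies $z_w=z_u+z_v$ with equality, and raising $z_v$ while holding $z_w$ fixed breaks that constraint.

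The correct move is to contradict the \emph{minimality} you already arranged, not the optimality of $\theta$. Take $W$ to be the set of non-decomposable vertices; each $w\in W$ is B-only and has at least one child in $W$. Decreasing every $z_w$, $w\in W$, by a common small $\epsilon>0$ keeps all constraints: for $w\in W$ the A-constraint has slack and the B-constraint reads $\epsilon\le\epsilon_u+\epsilon_{w'}$ with at least one summand equal to $\epsilon$; for $v\notin W$ the right-hand sides only decrease. This yields a strictly smaller feasible solution, contradicting your normalization and forcing $W=\emptyset$. With that fix your argument goes through and matches the paper's conclusion.
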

\begin{proof}
We say a function $v$ is decomposable if either (i) $z_v = c_v - \theta$, or (ii) $z_v = z_u + z_w$ (for $M(v)=(u,w)$) and both $u,w$ are decomposable.

Let $G$ be the decomposition graph, which is a directed graph with the vertices being the functions and there is an edge from $v$ to $u$ (resp. $w$) if and only if $z_v=z_u+z_w$ (for $M(v)=(u,w)$) and $w$ (resp. $u$) is decomposable. (Note that the condition for a vertex to have an outward edge is weaker than the condition for a vertex to be decomposable.)

We will show that $G$ contains a cycle.
Assume otherwise, that is we have a partial order between the vertices in $G$ with $u\le v$ if there is an edge $vu$.
Consider $\theta'=\theta-\epsilon$ for a small enough $\epsilon$, we show that there is a solution with $\theta'$ (which contradicts with the minimality of $\theta$).
We first start with all decomposable functions $v$ with $z_v = c_v-\theta$ and increase it to $z'_v = c_v-\theta'$ and gradually increase $z_v$ for decomposable functions $v$ with $z_v=z_u+z_w$ to $z'_v=z'_u+z'_w$. Finally, for those $v$ with an edge $vu$ in $G$ whose $z'_v$ is not established yet, we increase $z_v$ to $z'_v=z_u+z'_w$ with $z_v$ for smaller $v$ in the partial order updated first. Note that we do not need to update $z_v$ twice for any $v$. For the remaining functions $v$ we keep $z'_v=z_v$ and obtain a solution $\{z'_v\}_v$ for $\theta'$.

Now $G$ contains a cycle, say $v_0\to v_1\to \dots \to v_k\to v_0$ with $z_{v_i}=z_{v_{i+1}} + z_{w_{i+1}}$ for $M(v_i)= (v_{i+1}, w_{i+1})$ (and $z_{v_k}=z_{v_0}+z_{w_0}$). Since $z_{v_0}=\left(\sum_{i=0}^k z_{w_i}\right) + z_{v_0}$, the sum $\sum_{i=0}^k z_{w_i}$ is zero.

As each $w_i$ is decomposable, we can construct a composition tree so that the root is labeled $w_i$ and $z_{w_i}$ is the sum of $c_v - \theta$ over all the labels $v$ of the leaves.

We now obtain a pseudo-loop whose main path is the same as the cycle in $G$ and the other branches are the above decomposition trees. This pseudo-loop has rate $\theta$ as the sum of $z_{w_i}$ is zero.

Let the number of leaves excluding the marked leaf be $m$, then for any $n=mp+r$ ($1\le r\le m$), the claim follows from the boundedness of $r$ and
\[
	g(n) \ge mp\theta + O(1). \qedhere
\]
\end{proof}

Theorem \ref{thm:equiv_to_linear_prog} follows from the two claims.

\begin{remark}
	Given a solution of the program, it is possible to construct a pseudo-loop of the growth rate in linear time as in the process of the second claim. The least trivial part is to check if the functions are decomposable. We leave it as an exercise for the readers.
\end{remark}

\subsection*{Relation to the dual program}
We relate the dual program to pseudo-loops of the growth rate. The dual program has $2|V|$ variables $\{x_v, y_v: v\in V\}$ so that for each $v$ we have
\[
	\begin{gathered}
		x_v+y_v=\sum_{u,w:\ M(u)=(v,w)} y_u + \sum_{u,w:\ M(u)=(w,v)} y_u,\\
		x_v\ge 0, \\
		y_v\ge 0,
	\end{gathered}
\]
and the sum of all $x_v$ is
\[
	\sum_v x_v=1.
\]
The object of the program is to maximize
\[
	\sum_v c_v x_v.
\]

The maximum value is the same solution as in the original program, which is the growth rate $\lambda$. We show that a pseudo-loop of the rate $\lambda$ can give a solution to the dual program in linear time of the number of variables. In fact, the transform is more straightforward than the other direction with the original program.

Consider a pseudo-loop with the rate $\lambda$. We let $x'_v$ be the number of leaves labeled $v$ in the tree, and let $y'_v$ be the number of non-leaf vertices labeled $v$. If $v$ is the label of the root, we reduce $x'_v$ by $1$ (not counting the marked leaf). All the variables $x'_v,y'_v$ that have not been assigned any value will be assumed to be zero.

By the structure of the tree, we have
\[
	x'_v+y'_v=\sum_{u,w:\ M(u)=(v,w)} y'_u + \sum_{u,w:\ M(u)=(w,v)} y'_u.
\]

Let $m=\sum_v x'_v$, we set $x_v=x'_v/m$ and $y_v=y'_v/m$ for each $v$. We have $\sum_v x_v=1$, and the object $\sum_v c_v x_v$ is the rate of the pseudo-loop, which is $\lambda$. Such a solution gives the maximum value to the object.

\section{Rate test in quadratic time}
\label{sec:rate_test}
We show that it is possible to test whether a proposed rate $\lambda_0$ is smaller than the actual rate $\lambda$ in quadratic time of the number of functions, which in turn immediately gives an algorithm to find an approximation to the growth rate in $O(-|V|^2\log\epsilon)$ for a given precision $\epsilon$. In other words, we settle Theorem \ref{thm:rate-test} as follows.

At first, if we reduce each starting value by $\lambda_0$, then the growth rate is reduced by $\lambda_0$. Therefore, to check $\lambda_0<\lambda$ we just need an algorithm to check if the growth rate of a system is positive. In other words, the question is whether there exists a pseudo-loop of positive rate.
We show that it is in turn equivalent to the existence of a function $v$ not having a tree rooted by label $v$ of maximum value $z_v$ (regardless of the number of leaves). This equivalence will be verified after presenting the following algorithm, which gives maximum values $z_v$ in case there are such values.

\begin{quote}
	\emph{Algorithm}: For each $v$, initiate $z_v = c_v$. We repeat the following process as long as there is a variable $v$ still having the initial value and $z_v < z_u + z_w$ for $M(v) = (u,w)$:
	\begin{itemize}
		\item  Update $z_v$ by the new better value $z_u+z_w$ and mark $z_v$ as a variable depending on $z_u, z_w$ in the sense that any further improvement on $z_u$ or $z_w$ will be directly followed by an improvement on $z_v$.
		\item Make a consequence of improvements on variables that directly or indirectly depend on $z_v$. If $z_v$ is itself a variable among those variables depending on $z_v$, then we stop the iteration and conclude $\lambda > 0$ right away.
	\end{itemize}
	If we finish without concluding $\lambda > 0$, then we conclude otherwise $\lambda\le 0$.
\end{quote}

The process can be done in $O(|V|^2)$ time since the second step in each iteration is a finite process of $O(|V|)$ time, as in the verification of the algorithm below.

We show that each $z_v$ from our algorithm gives the largest possible value over all the compositions trees rooted by $v$ without any inner pseudo-loop.
We reason by induction on the height of trees. Consider a tree $T^*_v$ rooted by $v$ with the maximum value over the trees without any inner pseudo-loop. It means no other occurrence of $v$ other than the root. Note that this tree may be different from the tree $T_v$ produced by our algorithm (due to the order we consider the functions). If $T^*_v$ is only a single vertex $v$, then its value is $c_v$. Our algorithm gives this value in the first place and the value of $T_v$ will never be decreased during the course. Suppose all other functions $v'$ in the tree $T^*_v$ than the root $v$ have their trees $T_{v'}$ produced by the algorithm attaining their maximum values. Since $T_v$ is the tree of two subtrees $T_u, T_w$, whose values are maximum due to the induction hypothesis, the value of $T_v$ is also the maximum value for $v$.

It means if there is no pseudo-loop of positive rate, the values produced by the algorithm are also the maximum values of the trees rooted by the functions. 

On the other hand, if there is any pseudo-loop of positive rate, our algorithm also detects a pseudo-loop of positive rate. In this case, $g(n)$ is unbounded. Consider a minimal composition tree giving a value larger than any $z_v$ given by our algorithm (minimality in the sense that no subtree has such a property).
Each branch of the root should give the value at most the value given by our algorithm due to the minimality of the composition tree. 
Suppose the algorithm stops without recognizing any pseudo-loop. Let $v$ be the label of the root. If $v$ is already marked as being dependent on any improvement of $u,w$ ($M(v)=(u,w)$), then we have a contradiction as $z_v < z_u + z_w$. If the dependency has not been established, then our algorithm has not finished yet, as we still have $z_v < z_u + z_w$ and another iteration should be proceeded. In either case, we have a contradiction.

As a matter of time complexity, we show that for the terminating condition in each iteration, we only need to check for $z_v$ but not any other $z_u$ whether that variable depends the improvement of $z_v$ for the turn $z_v$ is updated.
Initially, there is no pseudo-loop in the composition trees corresponding to all $z_v$. Suppose the same situation before a given iteration.
The reason for that lack is due to a missing edge of dependence. Therefore, if there is a pseudo-loop after updating $z_v$, it must be a pseudo-loop involving $v$ when only two new dependencies $v\to u$ and $v\to w$ are introduced as the missing edges. Also, before reaching again $v$ in case of a pseudo-loop, we do not have to check for other pseudo-loops when updating variables depending on $v$ as they do not exist. The second step of the iteration can be done easily with a queue in $O(|V|)$ time. It follows that the whole algorithm takes $O(|V|^2)$ time since the outmost loop is iterated at most $|V|$ times.

We have verified the validity of the algorithm by showing that the algorithm either stops in the middle and concludes the existence of a pseudo-loop of a positive rate ($\lambda > 0$), or finishes and gives the trees of the maximal values ($\lambda \le 0$).

\begin{remark}
	The best value obtained by the algorithm for the system whose starting values are reduced by $\lambda_0$ is also a solution of $z_v$ with a fixed $\theta=\lambda_0$ to System \eqref{eq:ineq_sys}. Of course, a solution only exists when $\lambda_0\ge \lambda$.
\end{remark}

\section{A cubic time algorithm to find the precise value of the growth rate}
\label{sec:cubic-time}
This section combines the results of Theorem \ref{thm:maximum} and Theorem \ref{thm:rate-test} to give a cubic time algorithm computing the growth rate precisely provided that the starting values are rational. In other words, we settle Theorem \ref{thm:cubic-time} as follows.

At first, we can assume that the starting values are not just rational but all integers, otherwise we can scale the starting values by an appropriate factor. By Theorem \ref{thm:maximum}, the growth rate of a system is the rate of a pseudo-loop without any removable inner pseudo-loop, which is of the form $a/b$ where $b$ is an integer at most $|V|2^{|V|-1}$. By the assumption that the starting values are integers, the numerator $a$ is also an integer and the rates $a_1/b_1$ and $a_2/b_2$ of two pseudo-loops without any removable inner pseudo-loop are either equal or at least $1/B^2$ apart where $B=|V|2^{|V|-1}$. It means we can stop the binary search with the quadratic time rate test algorithm in Theorem \ref{thm:rate-test} whenever the interval is small enough, in particular less than $1/B^2$. This interval contains only one fraction whose denominator is at most $B$, which is the growth rate. Given the interval, we can find this precise value of the growth rate using the Farey sequence in linear time of $|V|$, which is dominated by the time finding the interval, which is $O(|V|^2\log (B^2))=O(|V|^3)$. In fact, instead of taking the middle value in each iteration of the binary search, one can take the mediant as in the process of the Farey sequence and avoid applying the Farey sequence in the end. However, it does not change the cubic time of the algorithm. The algorithm can be seen as a nice combination of the binary search, the Farey sequence and some insights of the problem.

\begin{remark}
	The approach does not apply when the nature of the starting values is more complicated than rational numbers, e.g. transcendental numbers $e,\pi,\dots$. One can approximate these numbers by rationals and then recover the coefficients (the number of leaves with the corresponding label over the total number of leaves) from the estimated growth rate, however, it may take an exponential time for the recovery. The problem in this case seems to ask for a more direct solution than finding the value by the binary search. 
\end{remark}

\section{A proof of the limit using Fekete's lemma provided the dependency graph is connected}
\label{sec:proof-by-fekete}
Suppose the dependency graph is connected, this section provides a simple proof of the limit $\lambda$. It is interesting to apply Fekete's lemma here, as our problem itself can be seen as a variant of Fekete's lemma.

If there is an edge $vu$ with $M(v)=(u,w)$, then
\[
	v(n) \ge u(n-1) + c_w.
\]

It follows that if the distance from $v$ to $u$ is $d_{v,u}$, then
\[
	v(n) \ge u(n-d_{v,u}) + \alpha_{v,u},
\]
for some constant $\alpha_{v,u}$.

Consider a function $v$ with $M(v)=(u,w)$. For any $m,n$ large enough, we have
\[	
	v(m+n)\ge u(m)+w(n) \ge v(m-d_{u,v}) + \alpha_{u,v} + v(n-d_{w,v}) + \alpha_{w,v},
\]
where the constants $d_{u,v}, d_{w,v}$ are valid because the dependency graph is connected.

Adding to both sides $\alpha_{u,v}+\alpha_{w,v}$ and shifting the sequence by $d_{u,v}+d_{w,v}$ steps back, we have
\begin{equation*}
	\begin{multlined}
		v(m+n-d_{u,v}-d_{w,v}) + \alpha_{u,v}+\alpha_{w,v} \ge v(m-d_{u,v}-d_{w,v}) + \alpha_{u,v}+\alpha_{w,v} \\ 
		+ v(n-d_{u,v}-d_{w,v}) + \alpha_{u,v}+\alpha_{w,v}.
	\end{multlined}
\end{equation*}

Let $v'(n)=v(n-d_{u,v}-d_{w,v}) + \alpha_{u,v}+\alpha_{w,v}$, we can see that $v'(n)$ is a superadditive sequence. By Fekete's lemma, $v'(n)/n$ converges. It follows that $v(n)/n$ converges to the same limit. The convergence of $g(n)/n$ follows. (Note that it is still possible to apply Fekete's lemma to a sequence whose some beginning elements are not defined, e.g. by simply assigning small enough values to those elements.)

\begin{remark}
	The approach still works when we replace the equality in Equation \eqref{eq:setting} in the introduction by the inequality $v(n) \ge \max_{1\le m\le n-1} u(n-m) + w(m)$. However, the limit does not necessarily hold when the dependency graph is not connected, as pointed out in the introduction.
\end{remark}

\section*{Acknowledgement}
The author would like to thank G\"unter Rote for his suggestion to the linear program, the relation to Petri nets/pebble games and other helpful comments on this paper, and the anonymous reviewer for suggesting that the dual program may be also interesting.

\bibliographystyle{unsrt}
\bibliography{gor}
\end{document}